\author[]{Kaveh Eftekharinasab}
\title[]{The Morse-Sard-Brown Theorem for Functionals on Bounded-Fr\'{e}chet-Finsler Manifolds}
\address{Topology dept. \\ Institute of Mathematics of NAS of Ukraine \\ Te\-re\-shchen\-kivska st. 3, Kyiv, 01601 Ukraine}
\email{kaveh@imath.kiev.ua}
\keywords{Fr\'{e}chet manifolds, condition (CV), Finsler structures, Fredholm vector fields}
\subjclass[2010]{
58K05, 
58B20, 
58B15. 
}
\newtheorem{theorem}[subsection]{Theorem}
\newtheorem{lemma}[subsection]{Lemma}
\newtheorem{remk}[subsection]{Remark}
\newtheorem{prop}[subsection]{Proposition}
\newtheorem{defn}[subsection]{Definition}
\newtheorem*{con}{Condition}
\theoremstyle{definition}
\DeclareMathAlphabet{\mathpzc}{OT1}{pzc}{m}{it}
\newcommand{\rr}{\mathbb{R}}
\newcommand{\nn}{\mathbb{N}}
\newcommand\Finsler{\parallel \cdot \parallel^n}
\DeclareMathOperator{\codim}{codim}
\DeclareMathOperator{\Ind}{Ind}
\DeclareMathOperator{\Img}{Img}
\DeclareMathOperator{\Aut}{Aut}
\DeclareMathOperator{\Iso}{Iso}
\begin{document}

\begin{abstract}
In this paper we study Lipschitz-Fredholm vector fields on Bounded-Fr\'{e}chet-Finsler manifolds. 
In this context we generalize the Morse-Sard-Brown theorem, asserting that if $M$ is a  connected smooth bounded-Fr\'{e}chet-Finsler manifold endowed with 
a strengthened connection $\mathcal{K}$ and if $\xi$ is a smooth Lipschitz-Fredholm vector field on $M$ with respect to $\mathcal{K}$ which satisfies condition (CV).
Then, for any smooth functional $l$ on $M$ which is associated to $\xi$, the set of the critical values of $l$
is of first category in $\rr$. Therefore, the set of the regular values of $l$ is a residual Baire subset of $\mathbb{R}$.
\end{abstract}
\maketitle

\section{Introduction}
The notion of a Fredholm vector field on a Banach manifold $ B$ with respect to a connection on $B$ was introduced by Tromba~\cite{tromba}.
Such vector fields arise naturally in non-linear analysis from variational problems. There are geometrical objects such as harmonic maps, geodesics
and minimal surfaces which arise as the zeros of a Fredholm vector field. Therefore, it would be valuable to study the critical
points of functionals  which are associated to Fredholm vector fields.
In~\cite{tromba3}, Tromba proved the Morse-Sard-Brown theorem for this type of functionals in the case of Banach manifolds.
Such a theorem would have applications to problems in the calculus of variations in the large such as
Morse theory~\cite{tromba2} and  index theory~\cite{tromba}.

The purpose of this paper is to extend the theorem of Tromba (\cite[Theorem 1\textasciiacute(MSB)]{tromba3}) to a new class of generalized Fr\'{e}chet manifolds, 
the so-called bounded Fr\'{e}chet manifolds, which was introduced in~\cite{m}. Such spaces arise in geometry and physical field theory and have
many desirable properties. For instance, the space of all smooth sections of a fiber bundle (over closed or non-compact manifolds), which
is the foremost example of infinite dimensional manifolds, has the structure of a bounded Fr\'{e}chet manifold, see~\cite[Theorem 3.34]{m}.  
The idea to introduce this category of manifolds was to overcome some permanent difficulties (i.e. problems of intrinsic nature) in the theory of Fr\'{e}chet spaces. 
For example, the lack of a non-trivial topological group structure on the general linear group of a Fr\'{e}chet space. As for the importance of bounded Fr\'{e}chet manifolds, we refer
to~\cite{k},~\cite{ka} and~\cite{m}. 

Essentially, to define the index of Fredholm vector fields we need the stability of Fredholm operators under small perturbation but 
 this is unobtainable in the case of proper Fr\'{e}chet spaces (non-normable spaces) in general, see~\cite{k}.
Also, we need a subtle notion of a connection via a connection map but (because of the aforementioned problem) such a 
connection can not be constructed for Fr\'{e}chet manifolds in general (cf.~\cite{dod}). 
However, in the case of bounded Fr\'{e}chet manifolds under the global Lipschitz assumption on Fredholm operators,
the stability of Lipschitz-Fredholm operators was established in~\cite{k}. In addition, the notion of a connection via a connection map
was defined in~\cite{ka}. 
By using these results, we introduce the notion of a Lipschitz-Fredholm vector field in Section~\ref{fvf}. With regard to a kind of 
compactness assumption (condition (WCV)), which one needs to impose on vector fields, we will be interested in manifolds which admit a Finsler structure.
We then define Finsler structures for bounded Fr\'{e}chet manifolds in Section~\ref{fin}.
Finally, after we explained all subsequent portions for proving the Morse-Sard-Brown theorem,
we formulate the theorem in the setting of Finsler manifolds in Section~\ref{msb}. A key point in the proof of the theorem is 
Proposition~\ref{representation} which in its simplest form says that a Lipschitz-Fredholm
vector field $\xi $ near $origin$ locally has a representation of the form $\xi(u,v) = (u,\eta(u,v))$, where $\eta$ is some smooth map. Indeed, this is
a consequence of the inverse function theorem (Theorem~\ref{invr}). One of the most important advantage of the category of bounded Fr\'{e}chet manifold
is the availability of the inverse function theorem in the sense of Nash and Moser (see~\cite{m}).

Morse theory and index theories for Fr\'{e}chet manifolds have not been developed. Nevertheless, our approach provides some  essential tools 
(such as connection maps, covariant derivatives, Finsler structures) which would create a proper framework for these theories.
\section{Preliminaries}
 In this section we summarize all the necessary preliminary material that we need
for a self contained presentation of the paper. We shall work in the category of smooth manifolds and bundles. We refer
to~\cite{ka} for the basic geometry of bounded Fr\'{e}chet manifolds. 

A Fr\'{e}chet space $(F,d)$ is a complete metrizable locally convex space whose 
topology is defined by a complete translational-invariant metric $d$. A metric with absolutely convex balls will be called a standard metric. 
Note that every Fr\'{e}chet space  admits a standard metric which defines its topology: If $\alpha_n$ is an arbitrary sequence of 
positive real numbers converging to $zero$ and if $\rho_n$ is any sequence of continuous seminorms defining the 
 topology of $F$. Then
\begin{equation}
d_{\alpha,\, \rho}(e,f)\coloneq \sup_{n \in \mathbb{N}} \alpha _n \dfrac{\rho_n(e-f)}{1+\rho_n(e-f)}
\end{equation}
is a metric on $F$ with the desired properties. We shall always define the topology of Fr\'{e}chet spaces with this type of metrics.
Let $(E,g)$ be another Fr\'{e}chet space and let $\mathcal{L}_{g,d}(E,F)$ be the set of all linear maps $ L: E \rightarrow F $ such that 
\begin{equation*}
\mathpzc{Lip} (L )_{g,d}\, \coloneq \displaystyle \sup_{x \in E\setminus\{0\}} \dfrac{d (L(x),0)}{g( x,0)} < \infty.
\end{equation*}
We abbreviate $\mathcal{L}_{g}(E)\coloneq \mathcal{L}_{g,g}(E,E)$ and write $\mathpzc{Lip}(L)_{g}\, = \,\mathpzc{Lip}(L)_{g,g}$ for 
$L \in \mathcal{L}_{g}(E) $.
 The metric $D_{g,d}$ defined by
\begin{equation} \label{metric}  
 D_{g,d}: \mathcal{L}_{g,d}(E,F) \times \mathcal{L}_{g,d}(E,F) \longrightarrow [0,\infty) , \,\,
(L,H) \mapsto \mathpzc{Lip}(L -H)_{g,d} \,,
\end{equation}
is a translational-invariant metric on $\mathcal{L}_{d,g}(E,F)$ turning it into an Abelian topological group (see~\cite[Remark 1.9]{glockner}). The latter is not a topological vector space
in general, but a locally convex vector group with absolutely convex balls. The topology on $\mathcal{L}_{d,g}(E,F)$  will always be defined by the metric $D_{g,d}$.
We shall always equip the product of any finite number $k$ of Fr\'{e}chet spaces $(F_i,d_i), 1 \leq i \leq k$, 
with the maximum metric 
$$d_{max}((x_1,\dots,x_k),(y_1,\dots,y_k)) \coloneq \max _{1 \leq i \leq k} d_i(x_i,y_i).$$
Let $ E,F $ be Fr\'{e}chet spaces, $ U $ an open subset of $ E $, and $ P:U \rightarrow F $
 a continuous map. Let $CL(E,F)$ be the space of all continuous linear maps from $E$ to $F$ topologized by the compact-open topology. 
 We say $ P $ is differentiable at the point $ p \in U$ if there exists a linear map
$\operatorname{d}P(p): E \rightarrow F$ with $$\operatorname{d}P(p)h =  \lim_{t\rightarrow 0}\dfrac{P(p+th)-P(p)}{t},$$ for all $ h \in E $. 
If $P$ is differentiable at all points $p \in U$, if $\operatorname{d}P(p) : U \rightarrow CL(E,F)$ is continuous for all
$p \in U$ and if the induced map $ P': U \times E \rightarrow F,\,(u,h) \mapsto \operatorname{d}P(u)h $
 is continuous in the product topology, then we say that $ P $ is Keller-differentiable.
 We define $ P^{(k+1)}: U \times E^{k+1} \rightarrow F $ inductively by
\begin{equation*}
P^{(k+1)}(u,f_{1},...,f_{k+1}) = \lim_{t\rightarrow 0}\dfrac{P^{(k)}(u+tf_{k+1})(f_1,...,f_k)- P^{(k)}(u)(f_1,...,f_k)}{t}.
\end{equation*}

If $P$ is Keller-differentiable, $ \operatorname{d}P(p) \in \mathcal{L}_{d,g}(E,F) $ for all $ p \in U $, and the induced map 
$ \operatorname{d}P(p) : U \rightarrow \mathcal{L}_{d,g}(E,F)   $ is continuous, then $ P $ is called b-differentiable. We say $ P $ is $ MC^{0} $ and write $ P^0 = P $ if it is continuous. 
We say $P$ is an $ MC^{1} $ and write  $P^{(1)} = P' $ if it is b-differentiable. Let $ \mathcal{L}_{d,g}(E,F)_0 $ be 
the connected component of $ \mathcal{L}_{d,g}(E,F) $ containing the zero map. If $ P $ is b-differentiable and if 
 $V \subseteq U$ is a connected open neighborhood of $x_0 \in U$, then $P'(V)$ is connected and hence contained in the connected component
$P'(x_0) +  \mathcal{L}_{d,g}(E,F)_0 $ of $P'(x_0)$ in $\mathcal{L}_{d,g}(E,F)$. Thus, $P'\mid_V - P'(x_0):V \rightarrow \mathcal{L}_{d,g}(E,F)_0 $
 is again a map between subsets of Fr\'{e}chet spaces. This enables a recursive definition: If $P$ is $MC^1$ and $V$ can be chosen for each
 $x_0 \in U$ such that $P'\mid_V - P'(x_0):V \rightarrow \mathcal{L}_{d,g}(E,F)_0 $ is $ MC^{k-1} $, 
then $ P $ is called an $ MC^k$-map. We make a piecewise definition of $P^{(k)}$ by $ P^{(k)}\mid_V \coloneq \left(P'\mid_V - P'(x_0)\right)^{(k-1)} $
for $x_0$ and $V$ as before. 
The map $ P $ is $ MC^{\infty} $ if it is $ MC^k $ for all $ k \in \mathbb{N}_0 $. We shall denote the derivative of $P$ at $p$ by $\operatorname{D}P(p)$.

A bounded Fr\'{e}chet manifold is a Hausdorff second countable topological space with an atlas of coordinate 
charts taking their values in Fr\'{e}chet spaces such that the coordinate transition functions are all
 $ MC^{\infty} $-maps. 

\section{Lipschitz-Fredholm vector fields}\label{fvf}

Throughout the paper we assume that $(F,d)$ is a Fr\'{e}chet space and $M$ is a bounded Fr\'{e}chet manifold modelled on $F$.
Let $\{ (U_\alpha,\varphi_\alpha)\}_{\alpha \in \mathcal{A}}$ be a compatible atlas for $M$. The latter gives rise to a trivializing atlas $\{ (\pi_M^{-1}(U_\alpha),\psi_\alpha)\}_{\alpha \in \mathcal{A}}$ on
the tangent bundle $ \pi_M : TM \rightarrow M$, with
\begin{equation*}
 \psi_\alpha : \pi_M^{-1}(U_\alpha) \rightarrow \varphi_\alpha (U_\alpha)\times F , \quad j^1_p(\mathpzc{f}) \mapsto (\varphi_\alpha(p), (\varphi_\alpha \circ \mathpzc{f})'(0)),
\end{equation*}
where $j^1_p(\mathpzc{f})$ stands for the $1$-jet of an $MC^{\infty}$-mapping $\mathpzc{f}:\mathbb{R}\rightarrow M$ that sends $zero$ to $ p \in M$. 
Let $N$ be another bounded Fr\'{e}chet manifold and  $h: M \rightarrow N$  an $MC^k$-map. 
 The tangent map $Th :TM \rightarrow TN$ is defined by $T h(j^1_p(\mathpzc{f}))  = j^1_{h(p)}(h\circ \mathpzc{f})$.
Let $\Pi_{TM} : T(TM) \rightarrow TM$ be an ordinary tangent bundle  over $TM$ with 
the corresponding trivializing atlas $\{ ( \Pi^{-1}_{TM} (\pi_M^{-1}(U_\alpha)),\widetilde{\psi}_\alpha)\}_{\alpha \in \mathcal{A}}$. 
A strengthened connection map on the tangent bundle $TM$ (possible also for general vector bundles) was defined in~\cite{ka}.
A strengthened connection map for $ T{M} $ is a map $ \mathcal{K}: T(T{M}) \to T{M}  $, which is fully
determined by its local form:
\begin{gather*}
	\mathcal{K}_{\alpha}\coloneq \varphi_\alpha \circ \mathcal{K} \circ (\widetilde{\varPhi}_\alpha))^{-1},\\
	\varphi_\alpha(U_\alpha) \times {F} \times {F}\times  {F} \to \varphi_\alpha(U_\alpha) \times {F},\quad
	\mathbb{K}_{\alpha} = (f,g,h,k)=  (f,k+\tau_{\alpha}(f,g) h),
\end{gather*}
for a family of  mappings $$ \tau_{\alpha}: \varphi_\alpha(U_\alpha) \times {F}  \to \mathcal{L}_{{d}}({F})^{\times} .$$
Here $ \mathcal{L}_{{d}}({F})^{\times} $ is a subset of $ \mathcal{L}_{{d}}({F}) $ consists of invertiable mappings.
The mapping $ \tau_{\alpha} $ is $ MC^{k-1} $ in the sense that the map
\begin{equation*}
	\widehat{\tau_{\alpha}}: (\varphi_\alpha(U_\alpha) \times {F}) \times {F} \to {F} \times {F},\quad
	(x,y,h) \mapsto (\tau_{\alpha}(x,y)(h), \tau_{\alpha}^{-1}(x,y)(h))
\end{equation*}
is $ MC^{k-1} $. It follows of course that $ \mathbb{K} $ is of class $ MC^{k-1} $. 
 A strengthened connection on $M$ is a strengthened connection
map on the tangent bundle $\pi_M : TM \rightarrow M$. 
A strengthened connection $\mathcal{K}$ is linear if and only if it is linear on the fibers of the tangent map. Locally $T\pi$ is the map $U_{\alpha}\times F \times F \times F \rightarrow U_{\alpha} \times F$ 
defined by $T\pi(f,\xi,h,\gamma)= (f,h)$, hence locally its fibers are the spaces $\{f\}\times F \times \{h\} \times F$. Therefore, $\mathcal{K}$ is linear on these fibers
if and only if the maps $(g,k)\mapsto k+ \tau_{\alpha}(f,g)h$ are linear, and this means that the mappings $\tau_{\alpha}$ need to be linear
with respect to the second variable. 
\begin{remk}
 If $\varphi : U \subset M \rightarrow F$ is a local coordinate chart for $M$, then a vector field $\xi$ on $M$ induces a vector field
 $\xi$ on $F$ called the local representative of $\xi$ by the formula $\xi(x)=T\varphi\cdotp \xi(\varphi^{-1}(x))$. We shall frequently use $\xi$ itself
 to denote this local representation.
\end{remk}
In the following we adapt the Elliason's definition of covariant derivative~\cite{eliasson}.
\begin{defn}\label{covariant} 
 Let $\pi_M : TM \rightarrow M$ be the tangent bundle over $M$. Let $N$ be a bounded Fr\'{e}chet manifold modelled on $F$, 
 $\lambda: N \rightarrow M$  a Fr\'{e}chet vector bundle with fiber  $F$, and $K_{\lambda}$ a strengthened connection map on $TN$. If $\xi : M \rightarrow N$ is a smooth section of $\lambda$, 
 we define the
 covariant derivative of $\xi$ at $p \in M$ to be the bundle map $\nabla \xi: TM \rightarrow N$ given by
 \begin{equation*}
  \nabla \xi (p) = K_{\lambda} \circ T_p\xi,\, T_p\xi = T\xi|_{T_pM}.
 \end{equation*}
In a local coordinate chart $(U,\varPhi)$ it becomes
\begin{equation*}
 \nabla \xi (x) \cdotp y = \operatorname{D}\xi (x) \cdotp y + \tau_{\varPhi}(x, \xi (x))y.
\end{equation*}
Where $\tau_{\varPhi}$ is the component for $K_{\lambda}$ with respect to the chart $(U,\varPhi)$.
\end{defn}
The covariant derivative $\nabla \xi (p)$ is a linear map from the tangent space $T_pM$ to $F_p \coloneq \lambda^{-1}(p)$. This is because it is the combination of
the tangent map $T_p \xi$ that maps $T_pM$ linearly into $T_{\xi(p)} N$ with $K_{\lambda}$ which is a linear map from $T_{\xi(p)}N$
to $F_p$. 

\begin{defn}[\cite{k}, Definition 3.2]
Let $ (F,d) $ and $ (E,g) $ be Fr\'{e}chet spaces. 
A map $ \varphi \in \mathcal{L}_{g,d}(E,F) $ is called Lipschitz-Fredholm operator if it satisfies 
the following conditions:
\begin{enumerate}
\item The image of $ \varphi $ is closed.
\item The dimension of the kernel of $ \varphi$ is finite.
\item The co-dimension of the image of $ \varphi $ is finite.
\end{enumerate}
\end{defn}
We denote by $ \mathcal{LF}(E,F) $ the set of all Lipschitz-Fredholm operators from $ E $ into
$ F $. For $ \varphi \in \mathcal{LF}(E,F) $ we define the index of $ \varphi $ as follows:
\begin{equation*}
\Ind \varphi = \dim \ker \varphi - \codim \Img \varphi.
\end{equation*}
\begin{theorem}[\cite{k}, Theorem 3.2]\label{th}
$ \mathcal{LF}(E,F) $ is open in $\mathcal{L}_{g,d}(E,F)$ with respect to the topology defined by the Metric
 \eqref{metric}. Furthermore, the function $ T \rightarrow \Ind T $ is continuous on $ \mathcal{LF}(E,F) $,
 hence constant on connected components of $ \mathcal{LF}(E,F) $.
\end{theorem}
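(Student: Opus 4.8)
The plan is to run the classical block-reduction proof of the stability of Fredholm operators, with the one delicate analytic input — openness of the set of invertible operators — supplied by the Lipschitz structure rather than by operator-norm estimates. First I would fix $\varphi \in \mathcal{LF}(E,F)$ and produce topological direct-sum decompositions $E = \ker\varphi \oplus X$ and $F = \Img\varphi \oplus Y$. Here $\ker\varphi$ is finite-dimensional and $\Img\varphi$ is closed of finite codimension, so both admit closed complements in the Fr\'{e}chet spaces $E$ and $F$ (finite-dimensional subspaces and closed finite-codimensional subspaces are complemented in a Fr\'{e}chet space); moreover $Y$ may be taken finite-dimensional with $\dim Y = \codim \Img\varphi$. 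By the open mapping theorem for Fr\'{e}chet spaces, the restriction $\varphi|_X : X \to \Img\varphi$ is a topological isomorphism.

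The crux — and the step I expect to be the main obstacle — is the following. Writing $\mathpzc{Lip}(\cdot)_g$ for the seminorm entering the metric \eqref{metric}, one checks directly from the definition, using translation-invariance of the model metrics, that $\mathpzc{Lip}(\cdot)_g$ is subadditive and submultiplicative, $\mathpzc{Lip}(L\circ H)_g \le \mathpzc{Lip}(L)_g\,\mathpzc{Lip}(H)_g$. Consequently, whenever $\mathpzc{Lip}(A)_g < 1$ the Neumann series $\sum_{n\ge 0} A^n$ has $D_g$-Cauchy partial sums and, by completeness of $\mathcal{L}_g(E)$, converges to an inverse of $\mathrm{Id} - A$; hence the set of invertible operators is open, and inversion is continuous, in the $D_g$-topology. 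It is precisely here that the global Lipschitz hypothesis is used to circumvent the failure of a workable topological-group structure on the general linear group of a general Fr\'{e}chet space, thereby recovering the Banach-space phenomenon. This is essentially the content of the cited~\cite{k}.

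With this lemma in hand I would conclude by a Schur-complement computation. For $\psi$ close to $\varphi$ in the metric \eqref{metric}, write $\psi$ in block form along $E = \ker\varphi \oplus X$ and $F = \Img\varphi \oplus Y$; the $\Img\varphi \times X$ block $a$ is close to $\varphi|_X$, hence invertible by the lemma (whose inverse is again a Lipschitz operator). Composing $\psi$ on the left and right with the invertible operators built from $a^{-1}$ and the off-diagonal blocks that clear the corner entries, $\psi$ becomes equivalent to $\mathrm{Id}_{\Img\varphi} \oplus S$, where $S : \ker\varphi \to Y$ is a linear map between fixed finite-dimensional spaces. Then $\ker\psi \cong \ker S$ and $\Coker\psi \cong \Coker S$ are finite-dimensional and $\Img\psi$ is closed, so $\psi \in \mathcal{LF}(E,F)$; this proves openness.

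Finally, by rank--nullity in finite dimensions,
\[
\Ind\psi \;=\; \dim\ker S - \dim\Coker S \;=\; \dim\ker\varphi - \dim Y \;=\; \Ind\varphi ,
\]
so the integer-valued map $T \mapsto \Ind T$ is locally constant on $\mathcal{LF}(E,F)$, hence continuous, and therefore constant on each connected component.
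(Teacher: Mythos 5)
The paper gives no proof of this statement; it is quoted from~\cite{k}, so there is nothing internal to compare against. Your skeleton --- complement the finite-dimensional kernel and the closed finite-codimensional image (Theorem~\ref{comp2}), prove a Neumann-series perturbation lemma from subadditivity and submultiplicativity of $\mathpzc{Lip}(\cdot)$, then reduce by a Schur complement to a map between fixed finite-dimensional spaces --- is the classical route, and most of it is sound: the $\mathpzc{Lip}$-estimates do follow from translation invariance, $\mathcal{L}_{g}(E)$ is complete, perturbations of the identity are handled correctly, and the final index computation is right.

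There is, however, one genuine gap, and it sits exactly where the Banach argument fails to transfer. You invoke the open mapping theorem to make $\varphi|_X : X \to \Img\varphi$ a topological isomorphism, and then treat the nearby corner block $a$ as a small perturbation of an invertible operator. But your Neumann-series lemma only covers perturbations of operators invertible \emph{in the Lipschitz category}: to invert $a = \varphi|_X - B$ you must write $a = \varphi|_X\bigl(\mathrm{Id} - (\varphi|_X)^{-1}B\bigr)$ and estimate $\mathpzc{Lip}\bigl((\varphi|_X)^{-1}B\bigr) \le \mathpzc{Lip}\bigl((\varphi|_X)^{-1}\bigr)\,\mathpzc{Lip}(B)$, which requires $\mathpzc{Lip}\bigl((\varphi|_X)^{-1}\bigr) < \infty$. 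The open mapping theorem only yields continuity of $(\varphi|_X)^{-1}$, and in this category a continuous --- even bijective Lipschitz --- operator need not have a Lipschitz inverse. Concretely, on $E = \rr^{\nn}$ with $\rho_n(x) = \max_{k \le n}|x_k|$ and the standard metric $g = d_{\alpha,\rho}$, the diagonal operator $L(x) = (x_k/k)_{k}$ satisfies $\rho_n(Lx) \le \rho_n(x)$, hence $\mathpzc{Lip}(L)_g \le 1$, and is bijective (so $L \in \mathcal{LF}(E,E)$); yet $g\bigl(L^{-1}(te_k),0\bigr)/g(te_k,0) \to k$ as $t \to 0^+$, so $\mathpzc{Lip}(L^{-1})_g = \infty$ and $L^{-1} \notin \mathcal{L}_g(E)$. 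Thus the step ``$a$ is close to $\varphi|_X$, hence invertible by the lemma'' is not justified as written: you need either a separate argument that $(\varphi|_X)^{-1}$ is Lipschitz (false in general, by the example) or a different mechanism for openness --- which is precisely the nontrivial content being delegated to~\cite{k}.
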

Now we define a Lipschitz-Fredholm vector field on $M$ with respect to a connection on $M$. 
\begin{defn}\label{lipfred}
 A smooth vector field  $\xi : M \rightarrow TM$ is called Lipschitz-Fredholm with respect to a strengthened connection
 $\mathcal{K} :T(TM) \rightarrow TM$  if for each $p \in M$, $\nabla \xi(p): T_pM \rightarrow T_pM$ is a linear Lipschitz-Fredholm operator. The 
 index of $\xi$ at $p$ is defined to be the index of $ \nabla \xi (p)$, that is
 \begin{equation*}
  \Ind \nabla \xi(p) = \dim \ker \nabla \xi(p) - \codim \Img \nabla \xi(p).
 \end{equation*}
 By Theorem~\ref{th}, if $M$ is connected the index is independent of the choice of $p$ and the common integer is called the index of $\xi$, while
 if $M$ is not connected the index is constant on components and we shall require it to
be the same on all components of $M$.
\end{defn}

\begin{remk} \label{zero}
Note that the notion of Lipschitz-Fredholm vector field depends on the choice of  $\mathcal{K}$.
 If $p$ is  a $zero$ of $\xi$, $\xi(p) =0$, then by Definition~\ref{covariant} we have $\nabla \xi (p)  = \operatorname{D}\xi (p)$
 and hence the covariant derivative at $p$ does not depend on $\mathcal{K}$. In this case, the derivative of 
 $\xi$ at $p$, $ \operatorname{D}\xi (p)$, can be viewed as a linear endomorphism from $T_pM$ into itself. 
\end{remk}

\section{ Finsler structures}\label{fin}
 A Finsler structure on the bounded Fr\'{e}chet manifold $M$ is defined in the same way as in the
 case of Fr\'{e}chet manifolds (see~\cite{bejan} for the definition of Fr\'{e}chet-Finsler manifolds). 
 However, we need a countable family of seminorms on its Fr\'{e}chet model space $F$ which defines the topology of $F$. As mentioned
in Preliminaries, we always define the topology of a Fr\'{e}chet space by a metric with absolutely convex balls. One reason for this consideration is 
that a metric with this property can give us back original seminorms. More precisely:
\begin{remk}[\cite{m}, Theorem 3.4] \label{semi}
 Assume that $(E,g)$ is a Fr\'{e}chet space and $g$ is a metric with absolutely convex balls. Let $B_{\frac{1}{i}}^g(0) \coloneq \{ y \in E  \mid g(y,0) < \tfrac{1}{i}\}$, and suppose $U_i$'s, $i \in \nn$, are convex 
 subsets of $B_{\frac{1}{i}}^g(0)$. Define the Minkowski functionals
 $$
  \parallel v \parallel_i \coloneq \inf \{ \epsilon > 0 \mid \epsilon \in \mathbb{R},\,\dfrac{1}{\epsilon} \cdot v \in U_i\}. 
 $$
These Minkowski functionals are continuous seminorms on $E$. A collection $\{\parallel v \parallel_i\}_{i \in \nn}$ of these seminorms gives the topology of $E$. 
\end{remk}
\begin{defn}\label{defni}
Let $F$ be as before. Let $X$ be a topological space and $V = X \times F$ the trivial bundle with fiber $F$ over $X$. A Finsler
structure for $V$ is a collection of functions $ \Finsler: V \rightarrow \mathbb{R}^+$, $n \in \nn$, such that 
\begin{enumerate}
 \item For $b \in X$ fixed, $\parallel (b,x)\parallel^n = \parallel x \parallel_b^n$ is a collection of seminorms 
 on $F$ which gives the topology of $F$.
  \item Given $K>1$ and $x_0 \in X$, there exits a neighborhood $\mathcal{U}$ of $x_0$  such that 
 \begin{equation} \label{ine}
 \dfrac{1}{K}\parallel f \parallel^n_{x_0} \,\leqq \,\parallel f \parallel^n_{x}\, \leqq K \parallel f \parallel^n_{x_0}
 \end{equation}
for all $x \in \mathcal{U}$, $n \in \nn$, $f \in F$.
\end{enumerate}
\end{defn}
Let $\pi_M : TM \rightarrow M$ be the tangent bundle and let $\Finsler: TM \rightarrow \mathbb{R}^+$ be a collection of functions, $n \in \nn$. We say 
$\{\Finsler\}_{n \in \nn} $ is a Finsler structure for 
$TM$ if for a given $m_0 \in M$ and any open neighborhood $U$ of $m_0$  which trivializes 
the tangent bundle $TM$, i.e.
$$\psi:  \pi_M^{-1}(U)  \approx U \times (F_{m_0} \coloneq \pi_M^{-1}(m_0) ), $$
$\{\Finsler \circ \, \psi^{-1}\}_{n \in \nn}$ is a Finsler structure for $U \times F_{m_0}$.
\begin{defn}
 A bounded-Fr\'{e}chet-Finsler manifold is a bounded Fr\'{e}chet manifold together with a Finsler structure on its tangent bundle.
\end{defn}

\begin{prop}
 Let $N$ be a paracompact bounded Fr\'{e}chet manifold modelled on  a Fr\'{e}chet space $(E,g)$. If 
 all seminorms $\parallel \cdot \parallel_i$, $i \in \nn$, $($which are defined as in Remark~\ref{semi}$)$ are smooth maps on $E\setminus \{ 0\}$, then
 $N$ admits a partition of unity. Moreover, $N$ admits a Finsler structure.
\end{prop}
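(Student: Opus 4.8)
The plan is to prove the proposition in two stages, following the classical Palais strategy for infinite-dimensional manifolds: first establish the existence of a partition of unity, and then use it to glue local Finsler data into a global Finsler structure on $TN$.

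For the partition of unity, I would argue that the smoothness hypothesis on the seminorms $\parallel \cdot \parallel_i$ is precisely what is needed to produce smooth bump functions on $E$. Since each $\parallel \cdot \parallel_i$ is an $MC^{\infty}$-map on $E \setminus \{0\}$, a suitable real function composed with one of these seminorms (or with a finite $\max$, smoothed out) yields a nonnegative $MC^{\infty}$-function that is positive on a prescribed ball $B^g_r(0)$ and vanishes outside a slightly larger ball; here one uses that the metric $g$ itself is controlled by the seminorms via Remark~\ref{semi}, so metric balls and seminorm sublevel sets are comparable. Given such bump functions in the model space, one transports them to $N$ through charts. Because $N$ is paracompact and second countable, every open cover admits a locally finite refinement by chart domains, and the transported bumps can be normalized (dividing by their locally finite sum, which is a well-defined positive $MC^{\infty}$-function) to give a smooth partition of unity subordinate to any given open cover. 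The essential point is that all operations---composition with smooth seminorms, finite sums, and division by a positive smooth function---stay within the $MC^{\infty}$ category, which is legitimate since the excerpt's calculus is closed under these.

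For the Finsler structure, I would cover $N$ by chart domains $U_\alpha$ that trivialize $TN$, so that on each $U_\alpha$ the fiber seminorms of Definition~\ref{defni} are available from the model seminorms $\parallel \cdot \parallel_i$ via the trivialization $\psi_\alpha$; condition~(1) of Definition~\ref{defni} holds automatically on each chart, and condition~(2), the local comparability~\eqref{ine}, holds on each chart by continuity of the transition functions. Let $\{\rho_\alpha\}$ be a partition of unity subordinate to $\{U_\alpha\}$ furnished by the first part. I would then define the global Finsler functions by the weighted combination
\begin{equation*}
\parallel (m,x) \parallel^n \coloneq \sum_{\alpha} \rho_\alpha(m)\, \parallel (m,x) \parallel^n_{\alpha},
\end{equation*}
where $\parallel \cdot \parallel^n_\alpha$ is the local Finsler function read off in the chart $U_\alpha$. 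One then verifies that this collection is again a Finsler structure: fiberwise it is a convex combination of seminorms, hence a seminorm giving the correct topology (condition~(1)), and the local inequality~\eqref{ine} for the glued norm follows from the corresponding inequalities for the finitely many $\parallel \cdot \parallel^n_\alpha$ active near a point, since on a small enough neighborhood each factor satisfies a comparison with constant close to $1$ and the weights $\rho_\alpha$ sum to $1$.

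The main obstacle I anticipate is the construction of smooth bump functions in the bounded-Fr\'{e}chet setting, i.e.\ converting the abstract smoothness of the seminorms into genuine $MC^{\infty}$ bumps with controlled support; away from the origin the seminorms are smooth, but one must handle their behavior at $0$ carefully and ensure that the cutoff of a smooth real function composed with a seminorm remains $MC^{\infty}$ across the boundary of its support, where the seminorm's smoothness may fail. A secondary technical point is checking that the comparison constant in~\eqref{ine} survives the convex-combination gluing uniformly in $n \in \nn$; this should follow because the bound~\eqref{ine} in each chart is uniform in $n$ by hypothesis, but it requires that only finitely many charts contribute near each point, which is guaranteed by local finiteness of the partition of unity.
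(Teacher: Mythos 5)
The paper gives no proof of this proposition at all; it simply defers to \cite{bejan}, Propositions 3 and 4, so there is nothing in the text to compare your argument against line by line. Your outline is the standard Palais-type argument that such a reference carries out---smooth seminorms yield $MC^{\infty}$ bump functions, paracompactness plus second countability yields a subordinate partition of unity, and local trivializations of $TN$ are glued by convex combination---and it is sound in outline. The one obstacle you flag, namely producing bumps with metrically small support and handling smoothness at the origin, is resolved precisely by the structure of Remark~\ref{semi}: the sublevel set $\{ v : \parallel v \parallel_i < 1\}$ lies inside $U_i \subseteq B^{g}_{1/i}(0)$, so a single Minkowski functional already controls a small metric ball, and composing with a real cutoff that is constant near $0$ makes the resulting function smooth across the origin, where the seminorm itself need not be.
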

\begin{proof}
 See~\cite{bejan}, Propositions 3 and 4.
\end{proof}
If $\{ \Finsler\}_{n \in \nn}$ is a Finsler structure for $M$ then eventually we can obtain a graded Finsler structure, denoted by $(\Finsler)_{n \in \nn}$, for $M$.
Let $(\Finsler)_{n \in \nn}$ be a graded Finsler structure for $M$.
We define the length of piecewise $MC^1$-curve $\gamma : [a,b] \rightarrow M$ by 
\begin{equation*}
 L^n(\gamma) = \int_a^b \parallel \gamma'(t) \parallel_{\gamma(t)}^n\, \mathrm{d}t.
\end{equation*}
On each connected component of $M$, the distance is defined by
\begin{equation*} 
 \rho^n (x,y) = \inf_{\gamma} L^n(\gamma),
\end{equation*}
where infimum is taken over all continuous piecewise $MC^1$-curve connecting $x$ to $y$. Thus,
we obtain an increasing sequence of pseudometrics $\rho^n(x,y)$ and define the distance $\rho$ by
\begin{equation}\label{finmetric}
\rho (x,y) = \displaystyle \sum_{n = 1}^{n = \infty} \dfrac{1}{2^n} \cdot \dfrac {\rho^n(x,y)}{ 1+ \rho^n(x,y)}.
\end{equation}
\begin{lemma}[\cite{bejan}, Lemma 2]\label{psm}
A collection $\{\sigma^i\}_{i \in \nn}$ of pseudometrics on $F$ defines a unique topology $\mathcal{T}$ such that for every
sequence $(x_n)_{n \in \nn} \subset F$, we have $x_n \rightarrow x$ in topology $\mathcal{T}$ if and only if $\sigma^i(x_n,x) \rightarrow 0$, for
all $i \in \nn$. The topology is Hausdorff if and only if $x=y$ when all $\sigma^i(x,y)=0$. In addition, 
$$
\sigma (x,y) = \displaystyle \sum_{n = 1}^{n = \infty} \dfrac{1}{2^n} \cdot \dfrac {\sigma^n(x,y)}{ 1+ \sigma^n(x,y)}
$$
is a pseudometric on $F$ defines the same topology.
\end{lemma}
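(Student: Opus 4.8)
The plan is to funnel all three assertions through the single pseudometric $\sigma$ built from the weighted series, so that the convergence criterion, the uniqueness claim, and the Hausdorff dichotomy all become statements about one pseudometric topology rather than about the raw family $\{\sigma^i\}$. In outline: show $\sigma$ is a pseudometric, identify its induced topology $\mathcal{T}$ with the one in the statement by pinning down its convergent sequences, and then read off uniqueness and Hausdorffness as formal consequences.

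First I would verify that $\sigma$ is genuinely a pseudometric. The only nontrivial point is the triangle inequality, and it reduces to a property of the reshaping function $\phi(t)=t/(1+t)$. Since $\phi'(t)=(1+t)^{-2}$ is decreasing, $\phi$ is concave with $\phi(0)=0$, hence increasing and subadditive: $\phi(a+b)\le\phi(a)+\phi(b)$ for $a,b\ge 0$. Applying $\phi$ to the triangle inequality for each $\sigma^n$ and then using subadditivity gives
\[
\phi(\sigma^n(x,z)) \le \phi(\sigma^n(x,y)) + \phi(\sigma^n(y,z)),
\]
and multiplying by $2^{-n}$ and summing yields the triangle inequality for $\sigma$; symmetry and $\sigma(x,x)=0$ are immediate, and the series converges because $\phi\le 1$ bounds each term by $2^{-n}$. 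Crucially, the same computation shows $\sigma(x,y)=0$ if and only if $\sigma^i(x,y)=0$ for every $i$.

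Second, letting $\mathcal{T}$ be the topology induced by $\sigma$, I would prove that $\sigma(x_n,x)\to 0$ if and only if $\sigma^i(x_n,x)\to 0$ for all $i$. For the forward direction the $i$-th term alone is dominated by $\sigma(x_n,x)$, so $\phi(\sigma^i(x_n,x))\to 0$, and continuity of $\phi^{-1}$ gives $\sigma^i(x_n,x)\to 0$. For the converse I would split the series at an index $N$ with $\sum_{n>N}2^{-n}<\varepsilon/2$: the tail is below $\varepsilon/2$ uniformly in $n$, while the finite head tends to $0$ since each of its finitely many terms does. This identifies $\mathcal{T}$ with exactly the topology in the statement and simultaneously establishes the sequential convergence characterization.

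Finally, for uniqueness I would note that $\mathcal{T}$, being pseudometrizable, is first countable, so a subset is closed exactly when it contains the limits of all its convergent sequences; hence the closed sets, and therefore the topology, are completely determined by which sequences converge and to what. Any topology obeying the stated convergence criterion thus has the same convergent sequences as $\mathcal{T}$ and must coincide with it. The Hausdorff claim is then immediate from the last sentence of the second paragraph: a pseudometric topology is Hausdorff precisely when the pseudometric is a genuine metric, i.e. when $\sigma(x,y)=0$ forces $x=y$, which is equivalent to the point-separating condition that $x=y$ whenever all $\sigma^i(x,y)=0$. I expect this uniqueness clause to be the only real subtlety, since sequential convergence pins down a topology only under first countability; the argument must therefore lean on the metrizability established in the first two steps rather than on the convergence criterion in isolation.
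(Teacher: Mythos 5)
The paper itself offers no proof of this lemma; it is imported verbatim from Bejan's cited Lemma 2, so there is no in-house argument to compare against. Your construction of $\sigma$, the subadditivity argument via $\phi(t)=t/(1+t)$, the two-sided identification of $\sigma$-convergence with convergence in every $\sigma^i$ (termwise domination plus continuity of $\phi^{-1}$ one way, the tail-splitting estimate the other), and the reduction of the Hausdorff claim to ``$\sigma(x,y)=0$ iff $\sigma^i(x,y)=0$ for all $i$'' are all correct and constitute the standard route; they fully establish the final sentence of the lemma and the Hausdorff dichotomy.

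The one genuine gap is in the uniqueness clause, and it sits exactly where you suspected. Your argument shows that for a competitor topology $\mathcal{T}'$ satisfying the stated convergence criterion, every $\mathcal{T}'$-closed set is sequentially closed and hence closed in the first-countable topology $\mathcal{T}$; that yields only $\mathcal{T}'\subseteq\mathcal{T}$, not equality. The reverse inclusion would require $\mathcal{T}'$ itself to be sequential, which nothing guarantees, and the literal statement is in fact false without such a restriction: on an uncountable set the discrete and the co-countable topologies have identical convergent sequences (the eventually constant ones), yet differ. The pseudometrizability of $\mathcal{T}$ cannot repair this, since it constrains $\mathcal{T}$ and not the competitor. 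The honest fix is to read ``unique'' as unique among first-countable (or sequential, or pseudometrizable) topologies --- equivalently, to take $\mathcal{T}$ to be the topology generated by the $\sigma^i$-balls, which is the only sense in which the lemma is used later in the paper --- and with that reading your argument closes without further work.
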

With the aid of this lemma, the proof of the following theorem is close to the usual proof given for Banach manifolds (cf.~\cite{palais2}).
\begin{theorem}
 Suppose $M$ is connected and endowed with a Finsler structure $\{\Finsler\}_{n \in \nn}$. 
 Then the distance $\rho$ defined by~\eqref{finmetric} is a metric for $M$. Furthermore, the topology induced by this metric coincides with the original topology of $M$. 
\end{theorem}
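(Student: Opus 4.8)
The plan is to follow the classical Palais argument for Banach--Finsler manifolds (cf.~\cite{palais2}), letting Lemma~\ref{psm} absorb the passage from the sequence of seminorms to a single metric. First I would check that each $\rho^n$ is a pseudometric: symmetry follows by reversing a curve, the triangle inequality by concatenation, and $\rho^n(x,x)=0$ from the constant curve; connectedness of $M$ guarantees that any two points are joined by a piecewise $MC^1$-curve, so each $\rho^n$ is finite. Granting this, Lemma~\ref{psm} shows at once that $\rho$ in \eqref{finmetric} is a pseudometric and that the topology it defines is exactly the topology $\mathcal T$ determined by the whole family $\{\rho^n\}_{n\in\nn}$; moreover, that same lemma reduces both remaining assertions---that $\rho$ separates points and that $\mathcal T$ is the manifold topology---to statements about the individual $\rho^n$.

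Everything is then local. Fix $x_0\in M$ and a chart $(U,\varphi)$ with $\varphi(x_0)=0$ trivialising $TM$, and write $\parallel\cdot\parallel^n_0$ for the seminorm at $x_0$ transported to $F$ via the trivialisation. The decisive ingredient is the comparison \eqref{ine}: after shrinking $U$ to a neighbourhood $\mathcal U$ we may assume $\tfrac1K\parallel f\parallel^n_0\leqq\parallel f\parallel^n_x\leqq K\parallel f\parallel^n_0$ for all $x\in\mathcal U$, all $n$, and all $f\in F$, with a single constant $K>1$ serving every seminorm simultaneously. Two estimates follow. For the upper bound I would join $x_0$ to a nearby $x$ by the affine segment in the chart (legitimate since seminorm balls are convex), obtaining $\rho^n(x_0,x)\leqq K\parallel\varphi(x)\parallel^n_0$ for every $n$. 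For the lower bound I would use the fundamental theorem of calculus in $F$, which gives $\parallel\varphi(\gamma(b))-\varphi(\gamma(a))\parallel^n_0\leqq\int\parallel(\varphi\circ\gamma)'\parallel^n_0$, so that any curve remaining in $\mathcal U$ has $L^n$-length at least $\tfrac1K\parallel\varphi(x)\parallel^n_0$.

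The crux---and the step I expect to be the main obstacle---is converting this into a genuine lower bound for $\rho^n$, because a curve minimising the $n$-th length may escape $\mathcal U$, and since $\parallel\cdot\parallel^n_0$ is only a seminorm it need not detect such an escape. Here the graded structure is essential: as the seminorms are increasing, a neighbourhood basis at $x_0$ is given by single balls $V_m=\{x:\parallel\varphi(x)\parallel^m_0<r_m\}$, and for $m$ large enough $V_m\subseteq\mathcal U$. A first-exit argument then shows that any curve leaving $V_m$ already has $L^m$-length at least $r_m/K$, whence $\rho^m(x_0,x)\geqq r_m/K$ for every $x\notin V_m$; while for $x\in V_m$ the previous paragraph gives $\rho^m(x_0,x)\geqq\tfrac1K\parallel\varphi(x)\parallel^m_0$. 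Combining the two cases yields $\parallel\varphi(x)\parallel^m_0\leqq K\,\rho^m(x_0,x)$ whenever $\rho^m(x_0,x)<r_m/K$.

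These inequalities close the argument. Point separation is immediate: if $y\neq x_0$ then, the seminorms forming a Hausdorff family on $F$, some $\parallel\varphi(y)\parallel^m_0>0$, and the lower bound forces $\rho^m(x_0,y)>0$, hence $\rho(x_0,y)>0$; so $\rho$ is a metric. For the coincidence of topologies I would argue at the level of convergent sequences. The upper bound shows that manifold convergence $x_k\to x_0$ implies $\rho^n(x_0,x_k)\to0$ for all $n$, hence $\rho(x_0,x_k)\to0$. Conversely, if $\rho(x_0,x_k)\to0$ then each $\rho^n(x_0,x_k)\to0$; the escape bound places $x_k$ eventually inside every $V_m$, and then $\parallel\varphi(x_k)\parallel^m_0\leqq K\rho^m(x_0,x_k)\to0$ for each large $m$, so by grading $\parallel\varphi(x_k)\parallel^m_0\to0$ for all $m$, i.e. $x_k\to x_0$ in the chart. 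As $x_0$ is arbitrary, the two topologies have the same convergent sequences and, both being first countable, therefore coincide.
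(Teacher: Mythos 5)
Your proposal is correct and follows essentially the same route as the paper: reduce to the family $\{\rho^n\}$ via Lemma~\ref{psm}, get the upper bound from affine segments in a chart, and get the lower bound from the Finsler comparison \eqref{ine} together with the fundamental theorem of calculus and a first-exit argument. If anything, your treatment of the exit time is more careful than the paper's (which defines $\beta$ by requiring $\parallel \mu(t)-u_0\parallel_n = r$ simultaneously for all $n$), since you correctly isolate the issue that a single seminorm need not detect escape and resolve it using the graded structure.
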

\begin{proof}
 The distance $\rho$ is pseudometric by Lemma~\ref{psm}. We prove that $\rho(x_0,y_0) > 0$ if $x_0\neq y_0$. 
 Let $\{ \parallel \cdot \parallel_n\}_{n \in \nn}$ be a collection of all seminorms on $F$ (which are defined as in Remark~\ref{semi}).
 Given $x_0 \in M$, let $\varphi: U \rightarrow F$
 be a chart for $M$ with $x_0 \in U$ and $\varphi(x_0) = u_0$. Let $y_0 \in M$ and $\gamma(t)$ an $MC^1$-curve $\gamma : [a,b] \rightarrow M$ connecting $x_0$ to $y_0$. 
 Let $B_r(u_0)$ be a ball with center $u_0$ and radius $r>0$.
 Choose  $r$ small enough so that $\mathcal{U} \coloneq \varphi^{-1}(B_r(u_0)) \subset U$ and for a given $K>1$
 \begin{equation*}
 \dfrac{1}{K}\parallel f \parallel^n_{x_0} \,\leqq \,\parallel f \parallel^n_{x}\, \leqq K \parallel f \parallel^n_{x_0}
 \end{equation*}
 for all $x \in \mathcal{U}$, $n \in \nn$, $f \in F$. 
 Let $I = [a,b]$ and $ \mu (t) = \varphi \circ \gamma (t)$.
 If $\gamma(I) \subset \mathcal{U}$, then let $\beta = b$. Otherwise, 
 let $\beta$ be the first $t>0$ such that $\parallel \mu(t) - u_0  \parallel_n= r$ 
 for all $n \in \nn$. Then, since for $x \in U$ the map $\phi(x) : T_{x}M \rightarrow F$ given by
 $j_{x}^1 \mapsto \varphi (x) $ is a homeomorphism it follows that
 for all $ n \in \nn$  
 \begin{align*}
\int_a^b \parallel \gamma'(t) \parallel_{\gamma(t)}^n\, \mathrm{d}t &\geq \dfrac{1}{K} \int_a^{\beta} \parallel \phi^{-1}(x) \circ \mu'(t) \parallel_{x_0}^n \mathrm{d}t 
\geq k_1 \int_a^{\beta} \parallel \mu'(t)) \parallel_n \mathrm{d}t \\
&\geq k_1 \parallel \int_a^{\beta}  \mu'(t)  \mathrm{d}t \parallel_n
= k_1 \parallel \mu(\beta) - \mu(a) \parallel_n \quad \mathrm{for \: some} \: k_1 > 0.
\end{align*}
(The last inequality follows from~\cite[Theorem 2.1.1]{ham}). Thereby, if $x_0 \neq y_0$ then  $\rho^n(x_0,y_0) > 0$ and hence $\rho (x_0,y_0) > 0$.
Now we prove that the topology induced by $\rho$ coincides with the topology of $M$. By virtue of Lemma~\ref{psm}, we only need to show that 
$\{\rho^n\}_{n \in \nn}$ induces the topology which is consistent with the topology of $M$.
If $x_i \rightarrow x_0$ in $M$ then eventually $x_i \in \mathcal{U}$. 
Define $\lambda_i : [0,1] \rightarrow \mathcal{U}$, an $MC^1$-curve connecting $x_0$ to $x_i$, by $t\varphi(x_i)$. Then, for all $n \in \nn$
\begin{align*}
\rho^n (x_i,x_0) &\leq L^n (\lambda_i) = \int_0^1 \parallel \lambda_i'\parallel^n_{\lambda_i(t)} \mathrm{d}t = 
\int_0^1 \parallel \varphi(x_i) \parallel_{t\varphi(x_i)}^n \mathrm{d}t \\
&\leq K \int_0^1 \parallel \varphi(x_i) \parallel_{x_0}^n \mathrm{d}t  = K \parallel \varphi(x_i)\parallel_n.
\end{align*}
But $\varphi(x_i)\rightarrow 0$ as $x_i \rightarrow x_0$, thereby $\rho^n(x_i,x_0) \rightarrow 0$ for all $n \in \nn$. Conversely, if for all $n \in \nn$, $\rho^n(x_i,x_0) \rightarrow 0$
then eventually we can choose $r$ small enough so that $x_i \in \mathcal{U}$. Then, for all $n \in \nn$ we have 
$ \parallel \varphi(x_i) \parallel_{x_0}^n \leq K\rho^n(x_i,x_0) $ so $ \parallel \varphi(x_i) \parallel_{x_0}^n \rightarrow 0$ in $T_{x_0}M$,
whence $\varphi(x_i) \rightarrow 0$. Therefore,
$x_i \rightarrow x_0$ in $\mathcal{U}$ and hence in $M$.
\end{proof}
The metric $\rho$ is called the Finsler metric for $M$ and it is bounded by 1. 
  
\section{Morse-Sard-Brown Theorem}\label{msb}

In this section we prove the Morse-Sard-Brown theorem for functionals on bounded-Fr\'{e}chet-Finsler manifolds. The proof relies on
the following inverse function theorem.
\begin{theorem}[\cite{glockner}, Proposition 7.1. Inverse Function Theorem for $ MC^k$-maps] \label{invr}
Let $ (E,g) $ be a Fr\'{e}chet space with standard metric $g$. Let $U \subset E$ be open, $x_0 \in U$ and
$ f : U \subset E \rightarrow E $  an $ MC^k$-map, $ k \geq 1 $.
 If $f'(x_0) \in \Aut{(E)}  $, then there exists an open neighborhood
 $ V \subseteq U $ of $ x_0 $ such that $ f(V) $ is open in $ E $ and $ f\vert_V : V \rightarrow f(V) $ 
is an $ MC^k$- diffeomorphism.
\end{theorem}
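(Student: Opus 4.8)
The plan is to reduce the statement to a contraction-mapping problem in the complete metric space $(E,g)$ and then to bootstrap the regularity of the resulting local inverse. First I would normalize: since $f'(x_0)\in\Aut(E)$, its inverse lies in $\mathcal{L}_g(E)$, so replacing $f$ by the map $x\mapsto f'(x_0)^{-1}\bigl(f(x+x_0)-f(x_0)\bigr)$ — a composite of the $MC^k$-map $f$ with translations and the bounded linear isomorphism $f'(x_0)^{-1}$, hence again $MC^k$ — I may assume $x_0=0$, $f(0)=0$ and $f'(0)=\mathrm{id}_E$. Solving $f(x)=y$ for $y$ near $0$ is then equivalent to finding a fixed point of $T_y(x):=y+\phi(x)$, where $\phi(x):=x-f(x)$ satisfies $\phi(0)=0$ and $\phi'(0)=\mathrm{id}_E-f'(0)=0$.

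The technical heart is to convert $\phi'(0)=0$ into a uniform contraction estimate. Since $f$ is $MC^1$, the induced map $x\mapsto f'(x)$ is continuous into $\mathcal{L}_g(E)$ for the metric of \eqref{metric}, so $\mathpzc{Lip}(\phi'(x))_g=D_{g,g}(f'(x),\mathrm{id})$ can be made $\le\tfrac12$ on a closed ball $\overline{B}_r(0)$. Writing $\phi(x)-\phi(x')=\int_0^1\phi'(x'+t(x-x'))\cdot(x-x')\,dt$ and invoking the fundamental-theorem-of-calculus inequality for $MC^1$-maps (\cite[Theorem 2.1.1]{ham}), together with translation-invariance and absolute convexity of the balls of the standard metric $g$, I would obtain $g(\phi(x)-\phi(x'),0)\le\tfrac12\,g(x-x',0)$ for $x,x'\in\overline{B}_r(0)$. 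Hence for $y\in B_{r/2}(0)$ the map $T_y$ carries $\overline{B}_r(0)$ into itself and is a $\tfrac12$-contraction; completeness of $(E,g)$ and the Banach fixed point theorem then give a unique $x=h(y)\in B_r(0)$ with $f(h(y))=y$. Setting $V:=f^{-1}(B_{r/2}(0))\cap B_r(0)$, uniqueness yields injectivity of $f$ on $V$, the same estimate shows $h$ is Lipschitz, and therefore $f\vert_V$ is a homeomorphism onto the open set $f(V)=B_{r/2}(0)$.

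It remains to promote $h$ to an $MC^k$-map. Shrinking $r$ if necessary, continuity of $f'$ and the openness of $\Aut(E)$ in $\mathcal{L}_g(E)$ — the stability of Lipschitz automorphisms from \cite{k} (cf. Theorem~\ref{th}) — ensure $f'(x)\in\Aut(E)$ for all $x\in\overline{B}_r(0)$. A difference-quotient argument based on the Lipschitz bound already obtained then shows $h$ is Keller-differentiable with $h'(y)=f'(h(y))^{-1}$, that is $h'=\mathrm{inv}\circ f'\circ h$ with $\mathrm{inv}\colon\Aut(E)\to\Aut(E)$ the inversion map. I expect the \textbf{main obstacle} to lie exactly here: in an arbitrary Fr\'echet space inversion need not even be continuous, whereas in the bounded category $\mathrm{inv}$ is $MC^\infty$, and it is this fact that licenses the bootstrap. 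Granting it, I finish by induction on the order of differentiability: if $h$ is $MC^j$ then $f'\circ h$ is $MC^{\min(k-1,j)}$, and composing with the smooth map $\mathrm{inv}$ shows $h'$ is $MC^{\min(k-1,j)}$, i.e. $h$ is $MC^{\min(k,j+1)}$; starting from the continuity of $h$ and iterating up to $j=k$ proves that $h$, and hence $f\vert_V$, is an $MC^k$-diffeomorphism.
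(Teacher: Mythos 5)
The paper offers no proof of this statement --- it is imported verbatim from \cite{glockner}, Proposition 7.1 --- so the only meaningful comparison is with that source, whose argument yours reproduces: normalization to $f'(x_0)=\mathrm{id}$, a uniform contraction estimate for $x\mapsto x-f(x)$ obtained from continuity of $f'$ into $\mathcal{L}_g(E)$ together with the absolute convexity of the balls of the standard metric, the Banach fixed point theorem in the complete space $(E,g)$, and a bootstrap of the local inverse via $h'=\mathrm{inv}\circ f'\circ h$. Your proposal is sound, and you correctly isolate the one genuinely non-Banach ingredient, namely that $\Aut(E)$ is open in $\mathcal{L}_g(E)$ and inversion is $MC^\infty$ there --- a Neumann-series fact that holds precisely because of the global Lipschitz bounds and the completeness of the metric $D_{g,g}$, and the reason the theorem is available in the bounded category but not for general Fr\'{e}chet spaces.
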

The following consequence of this theorem is an important technical tool.
\begin{prop}[Local representation]\label{representation}
Let $F_1,F_2$ be Fr\'{e}chet spaces and $U$ an open subset of $F_1 \times F_2$ with $(0,0) \in U$. Let $E_2$ be another Fr\'{e}chet space and $\phi : U \rightarrow F_1 \times E_2$  
 an $MC^{\infty}$-map with $\phi(0,0) = (0,0)$. Assume that the partial derivative $\operatorname{D}_1\phi(0,0) : F_1 \rightarrow F_1$ 
 with respect to the first variable is linear
 isomorphism. Then there exists a local $MC^{\infty}$-diffeomorphism $\psi$ from an open neighborhood $V_1 \times V_2 \subseteq F_1 \times F_2$ of $(0,0)$
 onto an open neighborhood of $(0,0)$ contained in $U$ such that  $\phi \circ \psi (u,v) = (u,\mu (u,v))$, where 
 $\mu : V_1 \times V_2 \rightarrow E_2$ is an $MC^{\infty}$-mapping.
\end{prop}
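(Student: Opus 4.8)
The plan is to reduce the proposition to the inverse function theorem (Theorem~\ref{invr}) by ``straightening out'' the first component of $\phi$. Write $\phi = (\phi_1,\phi_2)$ with $\phi_1 : U \to F_1$ and $\phi_2 : U \to E_2$ the two $MC^\infty$ components, so that the hypothesis is precisely that $\operatorname{D}_1\phi_1(0,0) : F_1 \to F_1$ (the block that the statement abbreviates $\operatorname{D}_1\phi(0,0)$) is a linear isomorphism. Introduce the auxiliary map
\[
 \Phi : U \longrightarrow F_1 \times F_2, \qquad \Phi(x,y) = (\phi_1(x,y),\, y).
\]
The role of $\Phi$ is that it retains the first component of $\phi$ while leaving the second coordinate untouched; its local inverse will then furnish the desired normal form. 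Indeed, if $\psi$ inverts $\Phi$ then $\Phi\circ\psi = \mathrm{id}$ forces $\psi(u,v) = (\psi_1(u,v),\,v)$ together with $\phi_1(\psi(u,v)) = u$, whence $\phi\circ\psi(u,v) = (u,\ \phi_2(\psi(u,v)))$ and the map $\mu := \phi_2\circ\psi$ is the one sought.

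First I would check that $\Phi$ is an $MC^\infty$-map (it is assembled coordinatewise from $\phi_1$ and the identity) and compute its derivative at the origin,
\[
 \operatorname{D}\Phi(0,0)(h,k) = \bigl(\operatorname{D}_1\phi_1(0,0)h + \operatorname{D}_2\phi_1(0,0)k,\ k\bigr).
\]
This linear map is block upper triangular: the identity of $F_2$ sits in the lower-right corner and the hypothesised isomorphism $\operatorname{D}_1\phi_1(0,0)$ in the upper-left. It is therefore an algebraic bijection, with inverse
\[
 (a,b) \longmapsto \bigl(\operatorname{D}_1\phi_1(0,0)^{-1}(a - \operatorname{D}_2\phi_1(0,0)b),\ b\bigr).
\]

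The hard part is to promote this algebraic bijection to a genuine element of $\Aut(F_1\times F_2)$, as Theorem~\ref{invr} demands; this is the only step where the bounded-Fr\'echet structure (rather than a bare topological-vector-space argument) is essential. Here I would use that $\phi$, being $MC^\infty$ and hence $b$-differentiable, has Lipschitz (bounded) total derivative, so the off-diagonal term $\operatorname{D}_2\phi_1(0,0)$ is Lipschitz, and that the hypothesis ``$\operatorname{D}_1\phi_1(0,0)$ is a linear isomorphism'' is to be read in the same category, so that $\operatorname{D}_1\phi_1(0,0)^{-1}$ is Lipschitz too. Equipping $F_1\times F_2$ with the maximum metric $g_{\max}$ of standard metrics on the factors --- which again has absolutely convex balls and so is itself standard, as required by Theorem~\ref{invr} --- a direct estimate then shows both $\operatorname{D}\Phi(0,0)$ and its explicit inverse above are Lipschitz, i.e.\ $\operatorname{D}\Phi(0,0) \in \Aut(F_1\times F_2)$.

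Finally I would invoke Theorem~\ref{invr}: there is an open neighborhood $W\subseteq U$ of $(0,0)$ on which $\Phi$ restricts to an $MC^\infty$-diffeomorphism onto the open set $\Phi(W)$, which contains $\Phi(0,0)=(0,0)$. Let $\psi$ be its inverse, shrink the domain to a product neighborhood $V_1\times V_2\subseteq\Phi(W)$ of $(0,0)$, and restrict $\psi$ accordingly; it is then an $MC^\infty$-diffeomorphism of $V_1\times V_2$ onto an open neighborhood of $(0,0)$ contained in $U$. The coordinate bookkeeping from the first paragraph gives $\phi\circ\psi(u,v) = (u,\mu(u,v))$ with $\mu=\phi_2\circ\psi : V_1\times V_2 \to E_2$, which is $MC^\infty$ as a composition of $MC^\infty$-maps. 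This is exactly the claimed local representation.
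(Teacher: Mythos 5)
Your proposal is correct and follows essentially the same route as the paper: introduce the auxiliary map $(x,y)\mapsto(\phi_1(x,y),y)$, observe its derivative at the origin is block upper triangular with the hypothesised isomorphism in the corner, invert it via Theorem~\ref{invr}, and read off $\mu=\phi_2\circ\psi$. You are in fact somewhat more careful than the paper on two points it passes over silently --- verifying that the block-triangular derivative genuinely lies in $\Aut(F_1\times F_2)$ in the Lipschitz sense required by the inverse function theorem, and shrinking the image of the local inverse to a product neighborhood $V_1\times V_2$.
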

\begin{proof}
 Let $\phi = \phi_1 \times \phi_2$, where $\phi_1 : U \rightarrow F_1$ and $\phi_2 : U \rightarrow E_2$. By assumption 
we have $ \operatorname {D_1}\phi_1(0,0) = \operatorname{D}_1 \phi(0,0) \vert_{F_1} \in \Iso (F_1, F_1) $.
 Define the map
\begin{equation*}
\begin{array}{cccc}
g : U \subset F_1 \times F_2 \rightarrow F_1 \times E_2 ,\\
g(u_1, u_2) = (\phi_1 (u_1,u_2),u_2)
\end{array}
\end{equation*}
locally at $(0,0)$. Therefore, for all $ u = (u_1,u_2) \in U,\, f_1 \in F_1, \, f_2 \in F_2 $ we have
\begin{equation*}
\operatorname{D}g(u)\cdotp(f_1,f_2)=
\begin{pmatrix}
 \operatorname {D_1}\phi_1(u) &  \operatorname{D_2}\phi_1(u) \\
0 & \mathrm{Id}_{E_2}  
\end{pmatrix}
\begin{pmatrix}
f_1 \\
f_2
\end{pmatrix}
 \end{equation*}
 and hence $\operatorname{D}g(u)$ is a linear isomorphism at $(0,0)$. By the inverse function theorem, there are open sets $ U' $ and $ V = V_1 \times V_2$ and an $ MC^{\infty}$- 
diffeomorphism $ \Psi : V \rightarrow U' $ such that
 $ (0,0) \in U' \subset U, \, g(0,0) \in V \subset F_1 \times E_2$,
 and $ \Psi ^{-1} = g \vert_{ U^{'}} $. Hence if $ (u,v) \in V $, then
 $ (u,v) = (g \circ \Psi)(u,v) = g(\Psi_1(u,v),\Psi_2(u,v))= (\phi_1 \circ \Psi_1(u,v),\Psi_2(u,v)),$
 where $ \Psi = \Psi_1 \times \Psi_2 $. This shows that $ \Psi_2(v,v)=v $ and $ (\phi_1 \circ \Psi)(u,v) = u $.
 Define $ \eta = \phi_2 \circ \Psi $, then 
$$ (\phi \circ \Psi)(u,v) = (\phi_1 \circ \Psi (u,v),\phi_2 \circ \Psi (u,v))=(u, \eta (u,v)). $$ This completes the proof.
\end{proof}
In the sequel, we assume that $M$ is connected and endowed with a Finsler structure $\{\Finsler\}_{n\in \nn}$ and a Finsler metric $\rho$.

\begin{defn}
 Let $l: M \rightarrow \mathbb{R}$ be an $MC^{\infty}$-functional  
 and $\xi : TM \rightarrow M$ a smooth vector field. By saying that $l$ and $\xi$ are
associated we mean $\operatorname{D}l(p) = 0$ if and only if $\xi(p) = 0$. A point $p \in M$ is called a critical point for $l$ if 
$\operatorname {D}l(p) = 0$. The corresponding value $l(p)$ is called a critical value. Values other than critical are called regular values. The set of all critical points of $l$ is denoted by $Crit_l$.
\end{defn}

The following is our version of the compactness condition due to Tromba~\cite{tromba2}.
\begin{con}[\textbf{CV}]
 Let $( m_i)_{i \in \nn}$ be a sequence in $M$. We say that a vector field $\xi :M \rightarrow TM$ satisfies condition \emph{(CV)} if 
 $\parallel \xi(m_i) \parallel^n \rightarrow 0$ for all $n \in \nn$, then
$( m_i)_{i \in \nn}$ has a convergent subsequence.  
\end{con}
It follows immediately from the definition
that the set of zeros of a vector field $ V $ that satisfies the condition (CV) is compact.

A subset $ G $ of a Fr\'{e}chet space $ E $ is called topologically complemented or split in $ E $ if there is
another subspace $H$ of $ E $ such that $E$ is
homeomorphic to the topological direct sum $ G \oplus H $. In this case we call $ H $ a topological complement
of $ G $ in $ F $.

We will need the following facts:
\begin{theorem}[\cite {m}, Theorem 3.14] \label{comp2}
Let $ E $ be a Fr\'{e}chet space. Then
\begin{enumerate}
\item Every finite-dimensional subspace of $ E $ is closed.
\item Every closed subspace $ G \subset E $ with $ \codim(G)=\dim (E/G)< \infty $ is topologically complemented in $ E $.
\item Every finite-dimensional subspace of $ E $ is topologically complemented.
\item Every linear isomorphism between the direct sum of two closed subspaces and $ E $, $ G \oplus H \rightarrow E $, is a homeomorphism.
\end{enumerate}
\end{theorem}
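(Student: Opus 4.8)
The plan is to establish the four assertions in turn, each by transporting a classical Banach-space argument into the Fr\'{e}chet setting, using only that $E$ is complete, metrizable and locally convex.

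For assertion (1), I would first recall that every $n$-dimensional Hausdorff topological vector space carries a \emph{unique} vector topology, namely the one making it linearly homeomorphic to $\rr^n$, every linear bijection between finite-dimensional Hausdorff spaces being automatically a homeomorphism. Let $G \subset E$ be finite-dimensional; its subspace topology is Hausdorff, so $G$ is homeomorphic to $\rr^{\dim G}$ and hence complete. Since $E$ is metrizable, a complete subspace is closed, which gives (1).

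For assertion (3), I would use the Hahn--Banach theorem, which is available precisely because $E$ is locally convex. Choosing a basis $e_1,\dots,e_n$ of the finite-dimensional subspace $G$ and continuous functionals $f_1,\dots,f_n \in E'$ with $f_i(e_j)=\delta_{ij}$ (extended from $G$ to all of $E$ by Hahn--Banach), the map $P(x)=\sum_{i=1}^n f_i(x)\,e_i$ is a continuous linear projection of $E$ onto $G$, so $H=\ker P$ is a closed topological complement. For assertion (2), I would dualize this idea: given a closed $G$ of finite codimension $m$, I lift a basis of the quotient $E/G$ to vectors spanning a finite-dimensional (hence closed, by (1)) subspace $H$ with $E=G\oplus H$ algebraically; continuity of the quotient map $E\to E/G$ together with the uniqueness of the topology on the finite-dimensional space $E/G\cong H$ makes the projection onto $H$ continuous, whence the decomposition is topological.

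For assertion (4), I would invoke the open mapping theorem. Closed subspaces $G,H$ of the Fr\'{e}chet space $E$ are themselves Fr\'{e}chet, so their external direct sum $G\oplus H$ is again a Fr\'{e}chet space; the addition map $(g,h)\mapsto g+h$ is continuous and linear, and if it is a bijection onto $E$ then the Banach--Schauder open mapping theorem (valid for Fr\'{e}chet spaces) forces its inverse to be continuous, so it is a homeomorphism. The routine verifications (linearity and continuity of the projections, the algebraic splitting $E=G\oplus H$) are straightforward; the only points demanding care are confirming that the classical tools survive outside the Banach category, and this is where I expect the main obstacle to lie. Hahn--Banach needs local convexity, the open mapping theorem needs completeness and metrizability, and the uniqueness-of-topology result needs the Hausdorff property. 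Since Fr\'{e}chet spaces satisfy all three, no genuine obstruction arises, and the entire difficulty is one of bookkeeping rather than of substance.
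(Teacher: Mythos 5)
Your proposed argument is correct: all four assertions follow exactly as you describe from the uniqueness of the Hausdorff vector topology in finite dimensions, Hahn--Banach (available by local convexity), and the open mapping theorem (available by completeness and metrizability), and each classical tool does survive in the Fr\'{e}chet category. Note, however, that the paper offers no proof of this statement at all --- it is quoted verbatim from M\"{u}ller's work (\cite{m}, Theorem 3.14) --- so there is nothing in the paper itself to compare your route against; your outline is the standard one and would serve as a self-contained justification of the cited result.
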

The proof of the Morse-Srad-Brown theorem requires Proposition~\ref{representation} and Theorem~\ref{comp2}. 
Except the arguments which involve these results and the Finslerian nature of manifolds, 
the rest of arguments are similar to that of Banach manifolds case, see~\cite[Theorem 1]{tromba3}. 
\begin{theorem}[Morse-Sard-Brown Theorem]
Assume that $(M,\rho)$ is endowed with a strengthened connection $\mathcal{K}$.  
 Let $\xi$ be a smooth Lipschitz-Fredholm vector field on $M$ with respect to $\mathcal{K}$ which satisfies condition \emph{(CV)}.
 Then, for any $MC^{\infty}$-functional $l$ on $M$ which is associated to $\xi$, the set of its critical values $l(Crit_l)$ 
 is of first category in $\rr$. Therefore, the set of the regular values of $l$ is a residual Baire subset of $\rr$.
\end{theorem}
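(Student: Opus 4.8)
The plan is to identify the critical set $Crit_l$ with the zero set $Z \coloneq \{p \in M : \xi(p) = 0\}$ of the associated vector field, since by the association hypothesis $\operatorname{D}l(p) = 0$ if and only if $\xi(p) = 0$. The goal is then to exhibit $l(Z)$ as a countable union of nowhere dense subsets of $\rr$. Because $M$ is second countable and metrizable by the Finsler metric $\rho$, it is Lindel\"of; hence it suffices to produce, around each $p \in Z$, a bounded open neighbourhood $W_p$ on which $l(Z \cap \overline{W_p})$ is nowhere dense, and then extract a countable subcover. The heart of the argument is a local reduction to the classical finite-dimensional Morse--Sard theorem, carried out via Proposition~\ref{representation}.

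To set up the local picture I would fix $p \in Z$ and work in a chart sending $p$ to $0 \in F$, so that the local representative $\xi$ satisfies $\xi(0) = 0$. By Remark~\ref{zero}, $\operatorname{D}\xi(0) = \nabla\xi(0)$ is a Lipschitz--Fredholm endomorphism of $F$. Writing $F_2 \coloneq \ker \operatorname{D}\xi(0)$ and letting $E_2$ be a complement of the closed subspace $\Img \operatorname{D}\xi(0)$, Theorem~\ref{comp2} guarantees that $F_2$ and $E_2$ are finite-dimensional, that $F$ splits as $F = F_1 \oplus F_2$ with $\operatorname{D}\xi(0)|_{F_1} : F_1 \to \Img\operatorname{D}\xi(0)$ a linear isomorphism, and that $F = \Img\operatorname{D}\xi(0) \oplus E_2$. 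Composing the local representative with the linear homeomorphism $\Img\operatorname{D}\xi(0)\oplus E_2 \to F_1 \times E_2$ (which is a homeomorphism by Theorem~\ref{comp2}) produces an $MC^{\infty}$-map $\phi : U \subseteq F_1\times F_2 \to F_1 \times E_2$ with $\phi(0,0)=(0,0)$ whose partial derivative $\operatorname{D}_1\phi(0,0)$ equals $\mathrm{Id}_{F_1}$, hence lies in $\Iso(F_1,F_1)$. Proposition~\ref{representation} then supplies an $MC^{\infty}$-diffeomorphism $\psi$ with $\phi\circ\psi(u,v) = (u,\mu(u,v))$, so the local zeros of $\xi$ are exactly the points $\psi(0,v)$ with $\mu(0,v)=0$; in particular $Z$ is locally confined to the finite-dimensional slice $\{u=0\}\cong F_2$.

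With the zeros pinned to a finite-dimensional slice, I would define $h(v) \coloneq l(\psi(0,v))$, an $MC^{\infty}$-functional on an open subset of the finite-dimensional space $F_2$, where $MC^{\infty}$-smoothness coincides with ordinary $C^{\infty}$-smoothness. The crucial observation is that \emph{every} local zero is a critical point of $h$: if $\mu(0,v)=0$ then $\psi(0,v)$ is a zero of $\xi$, hence a critical point of $l$, so $\operatorname{D}l(\psi(0,v))=0$ and therefore $\operatorname{D}h(v)=\operatorname{D}l(\psi(0,v))\circ\operatorname{D}\psi(0,v)|_{F_2}=0$. The classical Morse--Sard theorem applied to $h$ shows that $h(Crit_h)$, and \emph{a fortiori} $l(Z\cap W_p)$, has Lebesgue measure zero in $\rr$. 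Choosing $W_p$ to be a bounded neighbourhood with $\overline{W_p}$ bounded and contained in the local-representation domain, condition (WCV) forces $Z\cap\overline{W_p}$ to be compact, so its image under the continuous map $l$ is a compact set of measure zero, hence nowhere dense.

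Finally, covering $Z$ by the neighbourhoods $\{W_p\}_{p\in Z}$ and extracting a countable subcover $\{W_{p_i}\}_{i\in\nn}$ via the Lindel\"of property, I obtain $l(Crit_l) = l(Z) = \bigcup_{i\in\nn} l(Z\cap\overline{W_{p_i}})$ as a countable union of nowhere dense sets, so $l(Crit_l)$ is of first category in $\rr$; its complement, the set of regular values, is then a residual subset of the Baire space $\rr$. I expect the main obstacle to be the local step: arranging the Fredholm splittings of domain and target through Theorem~\ref{comp2} and composing with the correct linear homeomorphisms so that the hypotheses of Proposition~\ref{representation} are met precisely, all while verifying that these manipulations preserve $MC^{\infty}$-smoothness in the bounded-Fr\'{e}chet sense and that the finite-dimensional factor on which Morse--Sard is invoked is genuinely $\ker\operatorname{D}\xi(0)$.
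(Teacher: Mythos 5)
Your proposal is correct and follows essentially the same route as the paper: split $F$ via the Lipschitz--Fredholm operator $\operatorname{D}\xi(p)$ using Theorem~\ref{comp2}, apply Proposition~\ref{representation} to confine the local zeros of $\xi$ to the finite-dimensional kernel slice, invoke the classical Sard theorem there, and use condition (WCV) to get compact (hence nowhere dense) images, assembling a countable union at the end. The only differences are cosmetic --- you use the Lindel\"of property where the paper exhausts $M$ by closed bounded balls and uses compactness of the zero sets to extract finite subcovers, and you spell out explicitly (which the paper leaves implicit) that every local zero is a critical point of the restricted functional $h$, which is what licenses the application of Sard.
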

\begin{proof}
 We can assume $M = \bigcup_{i \in \nn} M_i$, where all the $M_i$'s are closed balls of radius $i$ about some fixed point $ m_0 \in M$ with respect  to the Finsler metric $\rho$.
 Thus to conclude the proof  it suffices to show that the image $l(C_B)$ of 
 the set $C_B$ of the zeros of $\xi$ in some closed set $B$ is compact without interior. 

Let $B$ be a closed set and $C_B$ as before. If $p \in C_B$ then eventually $\xi(p) = 0$.
Since $C_B$ is compact we only need to show that for a  neighborhood $U$ of $p$, $l(C_B \cap \overline{U})$ is compact without interior.
In other words, we can work locally. Therefore, we may assume without loss of generality that $p= 0 \in F$ and $\xi,\, l$ are defined locally on an open neighborhood of $p$.
An endomorphism $\operatorname{D}\xi(p) : F \rightarrow F$ is a Lipschitz-Fredholm operator because $\xi$ is a Lipschitz-Fredholm vector field
(see Remark~\ref{zero}). Thereby,
in the light of Theorem~\ref{comp2}
it has the split image $F_1$ with the topological complement $F_2$ and the split kernel $E_2$ with the topological complement $E_1$. 
Moreover, $\operatorname{D}\xi(p)$ maps $E_1$ isomorphically onto $F_1$ so we can identify $F_1$ with $E_1$. 
Then by Proposition~\ref{representation},
there is an open neighborhood  $ U \subset E_1 \times E_2 $ of $p$
 such that $ \xi (u,v) = (u, \eta(u,v)) $ for all $ (u,v) \in U$, 
where $ \eta : U \rightarrow F_2  $ is an $ MC^{\infty}$-map. Thus, if $\xi(u,v) = 0 = (u, \eta(u,v)) $ then $u = 0$. Therefore,
in this local representation, the zeros of $\xi$  (and hence critical points of $l$) in $\overline{U}$ are in
$\overline{U}_1  \coloneq \overline{U} \cap (\{0 \}\times E_2)$. The restriction of $l$, $l_{\overline{U}_1}:\overline{U}_1 \rightarrow \rr$, 
is again $MC^{\infty}$ and $C_B \cap \overline{U} = C_B \cap \overline{U}_1$  so $l(C_B \cap \overline{U}) = l(C_B \cap \overline{U}_1)$. 

We have for some constant $k \in \nn$, $\dim {\overline{U}_1} = \dim E_2 = k$
because $\xi(p)$ is a Lipschitz-Fredholm operator and $E_2$ is its kernel. Thus, by the classical Sard's theorem $l(C_B \cap \overline{U}_1)$ has measure zero
(note that $MC^k$-differentiability implies the usual $C^k$-differentiability for maps of finite dimensional manifolds).
Therefore, since $C_B \cap \overline{U}_1$ is compact it follows that $l(C_B \cap \overline{U}_1)$ is compact without interior and hence $l(C_B \cap \overline{U})$
is compact without interior. 
\end{proof}
\begin{remk}
From the proceeding proof we have that $\dim {F_2} = m$, where $m \in \nn$ is  constant. Thus, the index of $\xi$ is the following:
 \begin{equation*}
  \Ind \xi = \dim{E_2} - \dim{F_2} = k - m.
 \end{equation*}

\end{remk}

\end{document}